\numberwithin{equation}{section}
                        \theoremstyle{plain}
\newcommand\no[1]{}
\newtheorem{theorem}{Theorem}[section]
\newtheorem{thm}{Theorem}
\newtheorem{lemma}[theorem]{Lemma}
\theoremstyle{definition}
\newtheorem{remark}[theorem]{Remark}
\newcommand{\la}{\langle}
\newcommand{\ra}{\rangle}
\def\BZ{\mathbb Z}
\def\BR{\mathbb R}
\def\la{\langle}
\def\ra{\rangle}
\def\be { \begin{equation} }
\def\ee { \end{equation} }
\begin{document}

\title[LO surgeries on twisted torus knots]
{Left-orderability for surgeries on twisted torus knots}

\author{Anh T. Tran}

\begin{abstract}
We show that the fundamental group of the $3$-manifold obtained by $\frac{p}{q}$-surgery along the $(n-2)$-twisted $(3,3m+2)$-torus knot, with $n,m \ge 1$, is not left-orderable if $\frac{p}{q} \ge 2n + 6m-3$ and is left-orderable if $\frac{p}{q}$ is sufficiently close to $0$.
\end{abstract}

\thanks{2000 {\it Mathematics Subject Classification}. Primary 57M27, Secondary 57M05, 57M25.}

\thanks{{\it Key words and phrases.\/} Dehn surgery, left-orderable, L-space, twisted torus knot.}

\address{Department of Mathematical Sciences, The University of Texas at Dallas, Richardson, TX 75080, USA}
\email{att140830@utdallas.edu}

\maketitle

\section{Introduction}

The motivation of this paper is the L-space conjecture of Boyer, Gordon and Watson \cite{BGW} which states that an irreducible rational homology 3-sphere is an L-space if and only if its fundamental group is not left-orderable. Here a rational homology 3-sphere $Y$ is an $L$-space if its Heegaard Floer homology $\widehat{HY}(Y)$ has rank equal to the order of $H_1(Y; \BZ)$, and a nontrivial group $G$ is left-orderable if it admits a total ordering $<$ such that $g<h$ implies $fg < fh$ for all elements $f,g,h$ in $G$. 

Many hyperbolic $L$-spaces can be obtained via Dehn surgery. A knot $K$ in $S^3$ is called an L-space knot if it admits a positive Dehn surgery yielding an L-space. For an L-space knot $K$, Ozsvath and Szabo \cite{OS} proved that the $\frac{p}{q}$-surgery of $K$ is an $L$-space if and only if $\frac{p}{q}\ge 2g(K)-1$, where $g(K)$ is the genus of $K$. In view of the L-space conjecture, one would expect that the fundamental group of the $\frac{p}{q}$-surgery of an L-space knot $K$ is not left-orderable if and only if $\frac{p}{q} \ge 2g(K)-1$. 

By \cite{BM} among all Montesinos knots, the $(-2,3,2n+1)$-pretzel knots, with $n \ge 3$, and their mirror images are the only hyperbolic L-space knots. Nie \cite{Nie} has recently proved that the fundamental group of the $3$-manifold obtained by $\frac{p}{q}$-surgery along the $(-2,3,2n+1)$-pretzel knot, with $n \ge 3$, is not left-orderable if $\frac{p}{q} \ge 2n + 3$ and is left-orderable if $\frac{p}{q}$ is sufficiently close to $0$. This result extends previous ones by Jun \cite{Jun}, Nakae \cite{Nakae}, and Clay and Watson \cite{CW}. Note that the genus of the $(-2,3,2n+1)$-pretzel knot, with $n \ge 3$, is equal to $n+2$. 
 
 In this paper, we study the left-orderability for surgeries on the twisted torus knots. Some results about non left-orderable surgeries of  twisted torus knots were obtained by Clay and Watson \cite{CW}, Ichihara and Temma \cite{IT2015, IT2018}, and Christianson, Goluboff, Hamann, and Varadaraj \cite{C}. We will focus our study to the $(n-2)$-twisted $(3,3m+2)$-torus knots, which are the knots obtained from the $(3,3m+2)$-torus knot by adding $(n-2)$ full twists along an adjacent pair of strands. For $n,m \ge 1$,  these knots are known to be L-space knots, see \cite{Va}. Moreover, the $(n-2)$-twisted $(3,5)$-torus knots are exactly the $(-2,3,2n+1)$-pretzel knots. Note that the genus of the $(n-2)$-twisted $(3,3m+2)$-torus knot, with $n,m \ge 1$, is equal to $n+3m-1$. 
 
 The following result generalizes the one in \cite{Nie}. 
 
\begin{thm} \label{main}
Suppose $n, m \ge 1$. Then the fundamental group of the $3$-manifold obtained by $\frac{p}{q}$-surgery along the $(n-2)$-twisted $(3,3m+2)$-torus knot is 

(i) not left-orderable if $\frac{p}{q} \ge 2n + 6m-3$, 

(ii) left-orderable if $\frac{p}{q}$ is sufficiently close to $0$.\end{thm}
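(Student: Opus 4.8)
The two parts require rather different methods: part (i) is a combinatorial argument with the group presentation, while part (ii) uses representations into $\widetilde{\mathrm{PSL}_2(\R)}$.

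\emph{Part (i).} Write $K$ for the $(n-2)$-twisted $(3,3m+2)$-torus knot. For $n\ge 2$ this is the closure of the positive $3$-braid $(\sigma_1\sigma_2)^{3m+2}\sigma_1^{2(n-2)}$, while for $n=1$ it is the torus knot $T(3,3m+1)$, whose surgeries are Seifert fibered and for which the statement is classical; so assume $n\ge 2$. First I would record a concrete presentation $\pi_1(S^3\setminus K)=\la a,b,\dots\mid\dots\ra$ with meridional generators obtained from the Wirtinger presentation of the braid, together with an explicit word for the Seifert longitude $\lambda$, which commutes with the meridian $a$ and has zero total exponent; then $\pi_1(S^3_{p/q}(K))=\la a,b,\dots\mid\dots,\ a^p\lambda^q\ra$. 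Suppose this group has a left-ordering; reversing it if needed, we may assume $a>1$ (if $a=1$ then, since killing a meridian kills $\pi_1(S^3\setminus K)$, the whole group is trivial, hence not left-orderable). As $p/q\ge 2n+6m-3>0$, we have $p,q>0$, and, $a$ being of infinite order, $a^k>1$ if and only if $k>0$. The core of the proof is then a sequence of sign-propagation lemmas, read off the defining relations using $g>1\Rightarrow gh>h$: starting from $a>1$ one forces the remaining meridional generators, and then longer and longer positive subwords built from them, to lie above $1$, and ultimately shows that $a^{2n+6m-3}\lambda>1$. Combined with the surgery relation, which gives $\lambda^q=a^{-p}$, this yields $a^{(2n+6m-3)q-p}>1$, hence $p<(2n+6m-3)q$, contradicting $p/q\ge 2n+6m-3$. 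Here $2n+6m-3=2g(K)-1$ enters as exactly the number of positive blocks that can be split off from the longitude word before cancellation against $a^{-p}$ intervenes. I expect obtaining this \emph{sharp} threshold --- rather than the easy bound $2g(K)+2$, which follows merely from $\lambda$ being a positive word times $a^{-(2g(K)+2)}$ --- uniformly in $m$ to be the main obstacle, to be handled by an induction that adjoins one full-twist block $(\sigma_1\sigma_2)^3$ at a time and tracks how the relevant positive subwords grow.

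\emph{Part (ii).} The plan is to produce, for every slope $p/q$ in a neighbourhood of $0$, a nontrivial homomorphism $\pi_1(S^3_{p/q}(K))\to\mathrm{Homeo}^+(\R)$; since these surgered manifolds are irreducible, the criterion of Boyer--Rolfsen--Wiest (an irreducible $3$-manifold group that surjects onto a nontrivial left-orderable group is itself left-orderable) then finishes the argument. Starting from the presentation, I would study the $\mathrm{SL}_2(\bC)$ character variety of $\pi_1(S^3\setminus K)$, whose nonabelian part is a curve, and locate an arc $\{\rho_s\}$ of irreducible representations conjugate into $\mathrm{PSL}_2(\R)$ along which the meridian $\mu$ is elliptic; then $\rho_s$ carries the peripheral $\Z^2$ into a common one-parameter elliptic subgroup. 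Since $H^2(S^3\setminus K;\Z)=0$, each $\rho_s$ lifts to $\widetilde\rho_s\colon\pi_1(S^3\setminus K)\to\widetilde{\mathrm{PSL}_2(\R)}\subset\mathrm{Homeo}^+(\R)$; writing $\tilde t(s),\tilde\ell(s)$ for the translation numbers of $\widetilde\rho_s(\mu),\widetilde\rho_s(\lambda)$, one has $\widetilde\rho_s(\mu^p\lambda^q)=\mathrm{id}$ precisely when $p\,\tilde t(s)+q\,\tilde\ell(s)=0$, and for that slope $\widetilde\rho_s$ descends to a nontrivial action of $\pi_1(S^3_{p/q}(K))$ on $\R$. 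As $s$ tends to the end of the arc where $\rho_s$ becomes reducible, $\rho_s(\lambda)\to\mathrm{id}$ because $\lambda$ is null-homologous, so $\tilde\ell(s)\to 0$ while $\tilde t(s)$ stays away from $0$; hence the realized slopes $-\tilde\ell(s)/\tilde t(s)$ accumulate at $0$, and, with a little care about the sign of $\tilde\ell(s)$ near the endpoint, fill a neighbourhood of $0$. The main difficulty here is to exhibit such an arc of elliptic $\mathrm{PSL}_2(\R)$ representations and to verify that $s\mapsto-\tilde\ell(s)/\tilde t(s)$ genuinely sweeps through $0$, uniformly in $m$ and $n$; this needs sufficiently explicit control of the character variety and of the trace of $\lambda$ along it.
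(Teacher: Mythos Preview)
Your proposal identifies reasonable frameworks for both parts but stops short of the decisive steps; as written, neither half is a proof.

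For part (i), you defer the entire content to unstated ``sign-propagation lemmas'' and explicitly flag the sharp threshold as ``the main obstacle,'' to be handled by an unspecified induction on $m$. The paper does not use a Wirtinger presentation or induct on $m$; it works with the two-generator presentation $\langle a,w\mid w^n(aw)^ma^{-1}(aw)^{-m}=(wa)^{-m}a(wa)^mw^{n-1}\rangle$ and the dynamical realization on $\BR$. The crux is an algebraic rewriting: the relator yields $w(aw)^m=(aw)^m a\cdot g^{-1}ag$ for an explicit word $g$, so $(aw)^max<w(aw)^mx$ for every $x$; this bootstraps to $(aw)^ma^jx<w^j(aw)^mx$ for all $j\ge1$, and then the surgery bound (via an element $k$ with $a=k^q$, $\lambda=k^{-p}$) forces $wx<ax$, after which a four-line chain of inequalities produces the contradiction $(aw)^mx<(aw)^mx$. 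This is not generic positivity-pushing from meridional generators; the specific two-generator relator and that rewriting are what make the sharp constant $2n+6m-3$ drop out directly, with no induction on $m$. Your plan of ``adjoining one full-twist block $(\sigma_1\sigma_2)^3$ at a time'' is not how the argument runs and there is no indication it would reach the sharp bound.

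For part (ii), what you sketch is the Culler--Dunfield mechanism itself, and the difficulty you flag --- exhibiting an arc of elliptic $\mathrm{PSL}_2(\BR)$ representations limiting on a reducible one, uniformly in $n,m$ --- is exactly what the paper sidesteps by invoking the Herald--Zhang criterion: it suffices that the Alexander polynomial have a \emph{simple} root on the unit circle. The paper then computes $\Delta_{K_{n,m}}(t)$ explicitly via Fox calculus from the two-generator presentation and shows by an elementary calculus argument that the cosine sum $g(\theta)=2\cos(\theta/2)\cos(n+3m)\theta+\cos(n-\tfrac32)\theta$ has a simple zero on $(0,2\pi/3)$. Your plan instead calls for ``sufficiently explicit control of the character variety,'' which you do not supply; the Alexander-polynomial reduction is the missing idea that turns part (ii) into a finite, verifiable computation rather than an open-ended analysis of the character variety.
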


The rest of this paper is devoted to the proof of Theorem \ref{main}. In Section \ref{nonLO} we prove part (i). To do so, we follow the method of Jun \cite{Jun}, Nakae \cite{Nakae} and Nie \cite{Nie} which was developed for studying the non left-orderable surgeries of the $(-2,3,2n+1)$-pretzel knots. In Section \ref{LO} we prove part (ii). To this end, we apply a criterion for the existence of left-orderable surgeries of knots which was first developed by Culler and Dunfield \cite{CD}, and then improved by Herald and Zhang \cite{HZ}.

\section{Non left-orderable surgeries} \label{nonLO}

Let $K_{n,m}$ denote the $(n-2)$-twisted $(3,3m+2)$-torus knot. By \cite{IT2018} (see also \cite{IT2015}, \cite{CW}), the knot group of $K_{n,m}$ has a presentation 
$$\pi_1(S^3 \setminus K_{n,m}) = \la a, w \mid w^n (aw)^m a^{-1} (aw)^{-m} = (wa)^{-m} a (wa)^m w^{n-1} \ra,$$ where $a$ is a meridian. Moreover, the preferred longitude corresponding to $\mu = a$ is 
\begin{equation} \label{long}
\lambda = a^{-(4n+9m-2)} [(wa)^m w^{n}] (aw)^{m-1} a [w^{n} (aw)^m].
\end{equation}
Note that the first homology class of $w$ is twice that of the meridian $a$.

\begin{remark}
(i) It is known that $K_{n,1}$ is the $(-2,3,2n+1)$-pretzel knot. The above presentation of the knot group of $K_{n,1}$ was first derived in \cite{LT}, \cite{Nakae}. 

(ii) The formula \eqref{long} for the longtitude of $K_{n,m}$ in \cite{IT2015}, \cite{IT2018} contains a small error: $a^{-(4n+9m-2)}$ was written as $a^{-(2n+9m+2)}$.
\end{remark}

Let $M_{\frac{p}{q}}$ be the 3-manifold obtained by $\frac{p}{q}$-surgery along the $(n-2)$-twisted $(3,3m+2)$-torus knot $K_{n,m}$. Then 
$$\pi_1(M_{\frac{p}{q}}) = \la a, w \mid w^n (aw)^m a^{-1} (aw)^{-m} = (wa)^{-m} a (wa)^m w^{n-1}, a^p \lambda^q =1 \ra.$$

Since $a^p \lambda^q =1$ in $\pi_1(M)$ and $a\lambda = \lambda a$, there exists an element $k \in \pi_1(M)$ such that $a = k^q$ and $\lambda = k^{-p}$, see e.g. \cite[Lemma 3.1]{Nakae}. 

Suppose $m, n \ge 1$. Assume $\pi_1(M_{\frac{p}{q}})$ is left-orderable for some $\frac{p}{q} \ge 2n + 6m-3$, where $q>0$. Then there exists a monomorphism $\rho: \pi_1(M_\frac{p}{q}) \to \text{Homeo}^+(\BR)$ such that there is no $x \in \BR$ satisfying $\rho(g)(x) = x$ for all $g \in \pi_1(M)$, see e.g. \cite[Problem 2.25]{CR}. 

From now on we write $gx$ for $\rho(g)(x)$.

\begin{lemma}
We have $kx \not=x$ for any $x \in \BR$.
\end{lemma}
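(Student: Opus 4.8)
The plan is to argue by contradiction, showing that a fixed point of $k$ would have to be a global fixed point for $\rho(\pi_1(M))$. Suppose $kx_0=x_0$ for some $x_0\in\BR$. Since $n,m\ge 1$ we have $2n+6m-3\ge 5>0$, so $\frac pq\ge 2n+6m-3$ together with $q>0$ forces $p>0$. As $a=k^q$ and $\lambda=k^{-p}$ are powers of $k$, both fix $x_0$; that is, $ax_0=x_0$ and $\lambda x_0=x_0$. Because $\pi_1(M)$ is generated by $a$ and $w$, it therefore suffices to prove that $w$ fixes $x_0$ as well: then $\rho(g)x_0=x_0$ for every $g\in\pi_1(M)$, contradicting the choice of $\rho$.

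To obtain $wx_0=x_0$ I would use the longitude formula \eqref{long} and the defining relation together. Since $ax_0=x_0$, the leading factor $a^{-(4n+9m-2)}$ of \eqref{long} acts trivially at $x_0$, so the equation $\lambda x_0=x_0$ simplifies to
\be
[(wa)^m w^{n}]\,(aw)^{m-1}\,a\,[w^{n}(aw)^m]\,x_0 = x_0,
\ee
in which only a word in $a$ and $w$ with all exponents positive is applied to $x_0$. On the other hand, moving the conjugating factors across in $w^n (aw)^m a^{-1}(aw)^{-m}=(wa)^{-m}a(wa)^m w^{n-1}$ expresses $w$ as a product of two conjugates of $a$,
\be
w=(wa)^{-m}a(wa)^m\, w^{n-1}(aw)^{m}a\,(aw)^{-m}w^{-(n-1)}.
\ee
The aim is to play these two identities against each other, using $ax_0=x_0$ at every place where a factor $a$ abuts $x_0$, and conclude that the positive word above can fix $x_0$ only when $wx_0=x_0$. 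The hypothesis $\frac pq\ge 2n+6m-3$ is expected to be used at this stage, together with the relation $\lambda^q=a^{-p}$ coming from $a=k^q$, $\lambda=k^{-p}$.

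The main obstacle is precisely this last implication: one cannot literally ``evaluate'' \eqref{long} or the defining relation at $x_0$, because $a$ is known to fix $x_0$ itself but not the auxiliary points $wx_0$, $(aw)^jx_0$, $w^jx_0$ that appear once the words act. What is genuinely needed is an analysis of the one-sided behaviour of $a$ and $w$ near $x_0$: on each side of $x_0$, whether the homeomorphism pushes nearby points to the right, to the left, or neither, and one must verify that in every configuration for which $w$ does not fix a one-sided neighbourhood of $x_0$ the displayed positive-word identity is incompatible with the factorization of $w$ into conjugates of $a$. Carrying out this finite case analysis, controlled by those signs, by the explicit exponents occurring in \eqref{long}, and by the bound $\frac pq\ge 2n+6m-3$, is where the real work lies; once $wx_0=x_0$ has been established, $\rho(g)x_0=x_0$ for all $g\in\pi_1(M)$, contradicting the choice of $\rho$, and the lemma follows.
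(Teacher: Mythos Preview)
Your setup is correct: assuming $kx_0=x_0$ gives $ax_0=x_0$ and $\lambda x_0=x_0$, and after stripping off the factor $a^{-(4n+9m-2)}$ the longitude equation becomes
\[
[(wa)^m w^{n}]\,(aw)^{m-1}\,a\,[w^{n}(aw)^m]\,x_0 = x_0,
\]
a positive word $W$ in $a$ and $w$ fixing $x_0$. That is exactly the right equation, and it already finishes the proof immediately --- the elaborate machinery you describe (the expression of $w$ as a product of conjugates of $a$, a case analysis of one-sided behaviour, the bound $\frac pq\ge 2n+6m-3$) is unnecessary here.

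The missing observation is a two-line monotonicity argument. Suppose $wx_0\neq x_0$; without loss of generality $x_0<wx_0$. Since $\rho$ lands in $\mathrm{Homeo}^+(\BR)$, both $a$ and $w$ are increasing maps. Hence if $y\ge x_0$ then $ay\ge ax_0=x_0$ and $wy\ge wx_0>x_0$. Reading $W$ from right to left and inducting, every intermediate image stays $\ge x_0$, and once the first letter $w$ is applied the image becomes strictly $>x_0$ and remains so thereafter. As $W$ contains at least one $w$ (indeed $n,m\ge 1$), we get $Wx_0>x_0$, contradicting $Wx_0=x_0$. So $wx_0=x_0$, giving the global fixed point and the desired contradiction. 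This is precisely the paper's proof; your worry that ``$a$ is not known to fix the auxiliary points $wx_0,(aw)^jx_0,\dots$'' is a red herring, since only monotonicity of $a$ (not that it fixes those points) is needed.
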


\begin{proof}
Assume $kx=x$ for some $x \in \BR$. Then $x=k^q x=ax$. If $x=wx$ then $gx=x$ for all $g \in \pi_1(M)$, a contradiction. Otherwise, without loss of generality, we assume that $x < wx$. Then we have
\begin{eqnarray*}
x &=& a^{(4n+9m-2)} k^{-p} x = a^{(4n+9m-2)} \lambda x =  [(wa)^m w^{n}] (aw)^{m-1} a [w^{n} (aw)^m]x > x,
\end{eqnarray*}
which is also a contradiction.
\end{proof}

Since $kx \not=x$ for any $x \in \BR$ and $kx$ is a continuous function of $x$, without loss of generality, we may assume $x < kx$ for any $x \in \BR$. Then $x < k^q x = ax$. 

\begin{lemma} \label{base}
We have $(aw)^m a x < w(aw)^m x$ for any $x \in \BR$. \end{lemma}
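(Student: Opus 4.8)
The plan is to convert the desired inequality, by way of the defining relation of the group, into the fact already in hand that the meridian $a$ pushes every point of $\mathbb{R}$ strictly to the right. Throughout, abbreviate $u=(aw)^m$ and $v=(wa)^m$; in this notation the defining relation of $\pi_1(M)$ reads $w^n u\, a^{-1} u^{-1}=v^{-1} a v\, w^{n-1}$.

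First I would multiply this relation on the left by $v$ and on the right by $ua$. On the right-hand side the factor $v v^{-1}$ collapses, leaving $a\, v\, w^{n-1} u a$. On the left-hand side the $u^{-1}u$ in the middle collapses, and then the $a^{-1}a$ that remains collapses as well, leaving $v\, w^{n}\, u$. Hence one obtains the identity $v\, w^{n}\, u = a\, v\, w^{n-1}\, u\, a$ in $\pi_1(M)$. Writing $\alpha := v\, w^{n-1}=(wa)^m w^{n-1}$, this says $\alpha\, w\, u = a\, \alpha\, u\, a$.

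Then I would read this identity dynamically through $\rho$. Since $\rho$ takes values in $\text{Homeo}^+(\mathbb{R})$, the element $\alpha$ acts as an orientation-preserving homeomorphism of $\mathbb{R}$. Fix $x\in\mathbb{R}$ and set $z=\alpha u a x$. Because $a t>t$ for every $t\in\mathbb{R}$ (established just above), we have $z<az$; substituting the identity $a\,\alpha\, u\, a=\alpha\, w\, u$ turns this into $\alpha u a x<\alpha w u x$, and applying the increasing map $\alpha^{-1}$ yields $u a x<w u x$, that is $(aw)^m a x<w(aw)^m x$.

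The step I expect to demand the most care is the algebraic one: one must multiply on exactly the correct sides so that both $v v^{-1}$ and $u^{-1}u$ cancel, and so that the power of $a$ surviving in $\alpha\, w\, u=a\,\alpha\, u\, a$ comes out as $+1$ rather than $-1$ -- it is the positivity of $a$, not its negativity, that is available. A first-homology check (each of $v w^{n}u$ and $a v w^{n-1}ua$ has class $6m+2n$) together with the specialization $m=n=1$, where the identity becomes $wawaw=awa^2wa$, are quick safeguards against a sign or bookkeeping slip.
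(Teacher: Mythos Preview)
Your proof is correct and follows essentially the same route as the paper: both arguments manipulate the defining relation so that the desired inequality reduces to the already-established fact that $a$ moves every point strictly to the right. The only cosmetic difference is in the bookkeeping---the paper rewrites $w(aw)^m$ as $(aw)^m a\cdot g^{-1}ag$ with $g=(wa)^m w^{n}(aw)^m$ and then uses $g^{-1}agx>x$, whereas you obtain the equivalent identity $\alpha\,w(aw)^m = a\,\alpha\,(aw)^m a$ with $\alpha=(wa)^m w^{n-1}$ and cancel the increasing prefix $\alpha$; these are two rearrangements of the same relation and the underlying idea is identical.
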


\begin{proof}
Since $w^n (aw)^m a^{-1} (aw)^{-m} = (wa)^{-m} a (wa)^m w^{n-1}$ in $\pi_1(M_{\frac{p}{q}})$, we have
\begin{eqnarray*}
w(aw)^m x  &=& [(aw)^m a (aw)^{-m} w^{-n} (wa)^{-m} a (wa)^m w^{n-1}] w(aw)^m x \\
                   &=& (aw)^m a  [(wa)^m w^{n}(aw)^m]^{-1} a [(wa)^m w^{n}(aw)^m] x.
\end{eqnarray*}

Writing $g$ for $(wa)^m w^{n}(aw)^m$, we then obtain
$$w(aw)^m x  = (aw)^m a g^{-1} a g x > (aw)^m a x,$$
since $g^{-1} a g x > g^{-1} g x = x$. 
\end{proof}

Lemma \ref{base} implies that $(aw)^m  x < (aw)^m a x < w(aw)^m x$. Hence $x < wx$ for any $x \in \BR$.

\begin{lemma} \label{induction}
For any $x \in \BR$ and $k \ge 1$ we have 
$$(aw)^m a^k x < w^k(aw)^m x \qquad \text{and} \qquad a^k (wa)^m x < (wa)^m w^k x.$$
\end{lemma}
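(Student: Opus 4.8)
The plan is to prove both inequalities simultaneously by induction on $k\ge 1$, since the two statements are symmetric (interchanging the roles of the subword $aw$ versus $wa$) and will feed into each other. For the base case $k=1$, the first inequality $(aw)^m a x < w(aw)^m x$ is exactly Lemma \ref{base}, and the second inequality $a(wa)^m x < (wa)^m w x$ should follow by an entirely analogous manipulation of the defining relation (rewriting $w^n(aw)^ma^{-1}(aw)^{-m} = (wa)^{-m}a(wa)^m w^{n-1}$ so as to isolate $(wa)^m w x$ and produce a conjugate-of-$w$ term applied to a point, using $x<wx$ which was established right after Lemma \ref{base}); alternatively one hopes it can be deduced formally from Lemma \ref{base} together with $x<ax$ and $x<wx$.

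For the inductive step, assume both inequalities hold for $k$. To get $(aw)^m a^{k+1} x < w^{k+1}(aw)^m x$, I would apply the order-preserving homeomorphism represented by the appropriate group element to the inductive hypothesis and chain it with the $k=1$ case. Concretely, starting from $(aw)^m a \cdot (a^k x) < w(aw)^m (a^k x)$ (the base case applied to the point $a^k x$), I want to replace the right-hand side $w(aw)^m a^k x$ using the inductive hypothesis $(aw)^m a^k x < w^k (aw)^m x$: applying $w$ (which is order-preserving) gives $w(aw)^m a^k x < w^{k+1}(aw)^m x$. Combining, $(aw)^m a^{k+1} x = (aw)^m a \cdot a^k x < w(aw)^m a^k x < w^{k+1}(aw)^m x$, as desired. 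The second inequality is handled the same way, applying the $k=1$ version of the $a(wa)^m$ inequality to the point $w^k x$ and then using the inductive hypothesis for the second factor and applying $a$.

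The main subtlety — and the step I expect to require the most care — is establishing the second base-case inequality $a(wa)^m x < (wa)^m w x$, since the presentation's relation is not literally symmetric in $aw$ and $wa$, so one cannot simply invoke Lemma \ref{base} by symmetry. I would rewrite the relation in the form $(wa)^m w^n (aw)^m = a (wa)^m w^{n-1} (aw)^m a^{-1}$ or a similar rearrangement, apply it to $x$, and arrange the expression as $a \cdot (\text{conjugate of }a\text{ or }w) \cdot (\text{something}) x$ so that the conjugation trick $h^{-1} a h x > x$ (valid since $x<ax$) or $h^{-1} w h x > x$ (valid since $x < wx$) applies. Once both base cases are in hand, the induction itself is purely formal: every step is an application of an order-preserving map to a known inequality followed by transitivity, with no further use of the group relation.
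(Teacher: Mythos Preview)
Your inductive step is correct and essentially identical to the paper's (the paper applies the inductive hypothesis at the point $ax$ and then the base case, whereas you apply the base case at $a^kx$ and then the inductive hypothesis; both chains work). The one place you go astray is the second base case, which you flag as ``the main subtlety'' requiring a fresh manipulation of the relator. In fact it is literally the same inequality as Lemma~\ref{base}: in the free group one has the trivial regrouping identities
\[
a(wa)^m=(aw)^ma \qquad\text{and}\qquad (wa)^mw=w(aw)^m,
\]
so $a(wa)^m x<(wa)^m w x$ is just $(aw)^m a x< w(aw)^m x$ rewritten. The paper uses exactly these identities (silently) both here and in the inductive step for the second inequality:
\[
a^{k+1}(wa)^m x < a(wa)^m w^k x = (aw)^m a\, w^k x < w(aw)^m w^k x = (wa)^m w^{k+1} x.
\]
So no new use of the relator, of $x<ax$, or of $x<wx$ is needed for the second inequality at all; once you see the regrouping identity, both statements of the lemma are the same induction on top of Lemma~\ref{base}.
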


\begin{proof}
We prove the lemma by induction on $k \ge 1$. The base case ($k=1$) is Lemma \ref{base}. Assume $(aw)^m a^k x < w^k(aw)^m x$ for any $x \in \BR$. Then
\begin{eqnarray*}
(aw)^m a^{k+1} x &=& (aw)^m a^{k} (ax) \\
                            &<& w^k(aw)^m ax \\
                            &<& w^k (wa)^m w x = w^{k+1}(aw)^m x.
\end{eqnarray*}
Similarly, assuming $a^k (wa)^m x < (wa)^m w^k x$ for any $x \in \BR$ then
\begin{eqnarray*}
a^{k+1} (wa)^m x < a (wa)^m w^k x = (aw)^m a w^k x< w (aw)^m w^{k} x = (wa)^m w^{k+1} x.
\end{eqnarray*}
This completes the proof of Lemma \ref{induction}.
\end{proof}

\begin{lemma} \label{key}
With $\frac{p}{q} \ge 2n + 6m-3$ we have $wx< ax$ for any $x \in \BR$.
\end{lemma}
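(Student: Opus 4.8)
The plan is to convert the surgery hypothesis into a single inequality for a long positive word in $a$ and $w$, and then to derive a contradiction from a failure of $wx<ax$.

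\emph{Step 1 (the surgery inequality).} Since $x<kx$ for all $x$ and $k$ preserves orientation, $k^{j}x$ is strictly increasing in $j\in\BZ$. As $a=k^{q}$, $\lambda=k^{-p}$ and $p\ge(2n+6m-3)q$ (because $q>0$ and $\frac pq\ge 2n+6m-3$), this gives $\lambda^{-1}x=k^{p}x\ge k^{(2n+6m-3)q}x=a^{2n+6m-3}x$, i.e. $\lambda a^{2n+6m-3}x\le x$ for all $x\in\BR$. Substituting \eqref{long} for $\lambda$ and using $4n+9m-2=(2n+3m+1)+(2n+6m-3)$, this rearranges (after renaming $a^{2n+6m-3}x$ as $z$) to
\begin{equation*}
(wa)^{m}w^{n}(aw)^{m-1}a\,w^{n}(aw)^{m}\,z\ \le\ a^{2n+3m+1}z\qquad\text{for all }z\in\BR.\tag{$\star$}
\end{equation*}
Write $W$ for the word on the left; note that $W$ contains exactly $3m+2n-1$ letters $w$ and $3m$ letters $a$.

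\emph{Step 2 (the contradiction).} Suppose $wx_{0}\ge ax_{0}$ for some $x_{0}$, and consider the orientation-preserving homeomorphism $w^{-1}a$ of $\BR$. If $w^{-1}a$ has no fixed point, then $x\mapsto w^{-1}ax-x$ is continuous and nowhere zero, hence of constant sign. If $w^{-1}ax>x$ for all $x$ then $wx<ax$ for all $x$, contradicting the choice of $x_{0}$. If $w^{-1}ax<x$ for all $x$ then $wx>ax$ for all $x$; then, since $wy\ge ay$ at every point, a straightforward induction on word length gives $W z\ge a^{(3m+2n-1)+3m}z=a^{2n+6m-1}z$ for all $z$, so $(\star)$ forces $a^{3m-2}z\le z$, which is impossible because $m\ge1$ and $a$ moves every point strictly up. Hence it remains only to rule out that $w^{-1}a$ has a fixed point $\alpha$, i.e. $w\alpha=a\alpha$; here I would feed $\alpha$ into the group relation in the form $w(aw)^{m}=(aw)^{m}a\,g^{-1}ag$ with $g=(wa)^{m}w^{n}(aw)^{m}$ (as in the proof of Lemma \ref{base}), together with the inequalities of Lemma \ref{induction}, to show that $w\alpha=a\alpha$ propagates along the $\langle a,w\rangle$-orbit of $\alpha$ and then collides with $(\star)$ exactly as above.

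\emph{Expected obstacle.} The hard part is this last point: upgrading ``$wx_{0}\ge ax_{0}$ at a single point'' to ``$wx\ge ax$ for all $x$'', equivalently excluding a fixed point of $w^{-1}a$. The counting contradiction with $(\star)$ only bites once the inequality $w\ge a$ is available at every intermediate point of the $W$-orbit, so this step must genuinely invoke the defining relation of $\pi_{1}(M_{p/q})$ rather than merely the continuity dichotomy. The remaining ingredients — deriving $(\star)$, the exponent bookkeeping, and the final step $a^{3m-2}z>z$ — are routine.
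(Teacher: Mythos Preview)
Your Step~1 is correct and coincides with the paper's opening move: the surgery hypothesis together with $x<kx$ yields $ax\ge a^{2n+6m-2}\lambda x$, which unpacks (via \eqref{long}) to exactly your inequality $(\star)$ up to relabelling.

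The gap is in Step~2. You correctly dispose of the case where $w^{-1}a$ is fixed-point free with $wx>ax$ everywhere, but you explicitly leave the case $w\alpha=a\alpha$ unfinished, and your proposed attack --- ``propagate $w\alpha=a\alpha$ along the $\langle a,w\rangle$-orbit using the relation'' --- is not a proof. There is no mechanism forcing equality at one point to spread: the identity $w(aw)^{m}x=(aw)^{m}a\,g^{-1}agx$ from Lemma~\ref{base} produces a \emph{strict} inequality $(aw)^{m}ax<w(aw)^{m}x$ at every $x$, so feeding in $\alpha$ yields no new fixed points of $w^{-1}a$. Without this case your argument is incomplete, and you flag it yourself as the ``hard part''.

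The paper sidesteps the dichotomy altogether with a direct argument. Starting from your $(\star)$ (equivalently $ax\ge a^{-n}(wa)^{m}w^{n}(aw)^{m-1}a\,w^{n}(aw)^{m}a^{-(n+3m)}x$), it applies Lemma~\ref{induction} twice --- replacing the rightmost block $w^{n}(aw)^{m}$ by the smaller $(aw)^{m}a^{n}$, and the leftmost block $(wa)^{m}w^{n}$ by the smaller $a^{n}(wa)^{m}$ --- to obtain
\[
ax\ >\ w(aw)^{m-1}a\,(aw)^{m-1}a\,(aw)^{m}\,a^{-3m}x,
\]
and then uses only the already-established fact $y<wy$ to delete every remaining interior $w$, leaving $w\,a^{3m}a^{-3m}x=wx$. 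No contradiction, no case split on the sign of $w^{-1}a$, and in particular no need to compare $w$ with $a$ anywhere in advance. You already invoke Lemma~\ref{induction}; the fix is to use it here, directly on $(\star)$, rather than in the unfinished fixed-point branch.
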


\begin{proof}
With $\frac{p}{q} \ge 2n + 6m-3$ and $q>0$, we have $-p + (2n + 6m-3)q \le  0$. Since $a = k^q$, $\lambda = k^{-p}$ and $x < kx$ for any $x \in \BR$, we have 
\begin{eqnarray*}
ax &\ge& k^{-p+(2n + 6m-3)q} ax = a^{2n + 6m-2}\lambda x \\
&=& a^{-n} [(wa)^m w^{n}] (aw)^{m-1}a [w^{n} (aw)^m]  a^{-(n+3m)} x.
\end{eqnarray*}

Then, by Lemma \ref{induction}, we obtain
\begin{eqnarray*}
ax 
&>& a^{-n} [a^{n} 
(wa)^{m}]  (aw)^{m-1}a  [(aw)^m  a^n] a^{-(n+3m)} x 
\\
&=& 
w(aw)^{m-1} a (aw)^{m-1} a  (aw)^m a^{-3m} x \\
&>& w a^{m-1} a a^{m-1}a  a^{m}   a^{-3m} x = wx.
\end{eqnarray*}
Here, in the last inequality, we use the fact that $x < wx$ for any $x \in \BR$.
\end{proof}

With $\frac{p}{q} \ge 2n + 6m-3$, by Lemmas  \ref{induction} and \ref{key} we have \begin{eqnarray*}
(aw)^m x &=& [(aw)^m a] a^{-1} x \\
                &<& [w(aw)^m ] a^{-1} x = (wa)^m w (a^{-1} x) \\
                &<& (wa)^m a (a^{-1} x) = a^{-1} [(aw)^m a]  x \\
                &<& a^{-1} [w(aw)^m ] x = a^{-1} w[(aw)^m x]\\
                &<& a^{-1} a[(aw)^m x] = (aw)^m x,
\end{eqnarray*}
a contradiction. This proves Theorem \ref{main}(i).

\section{Left-orderable surgeries} \label{LO}

To prove Theorem \ref{main}(ii) we apply the following result. It was first stated and proved by Culler and Dunfield \cite{CD} under an additional condition on $K$.

\begin{theorem} \label{HZ}
\cite{HZ} 
For a knot $K$ in $S^3$, if its Alexander polynomial $\Delta_K(t)$ has a simple root on the unit circle, then the fundamental group of the
manifold obtained by $\frac{p}{q}$-surgery along $K$ is left-orderable if $\frac{p}{q}$ is sufficiently close to $0$.
\end{theorem}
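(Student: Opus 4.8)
The plan is to realize the conclusion dynamically: to show $\pi_1(M_{p/q})$ is left-orderable I would construct, for each rational $\frac{p}{q}$ sufficiently close to $0$, a nontrivial homomorphism from it into a left-orderable group, namely $\widetilde{PSL}(2,\BR) \subset \mathrm{Homeo}^+(\BR)$. By the theorem of Boyer, Rolfsen and Wiest, the fundamental group of a compact orientable irreducible $3$-manifold is left-orderable as soon as it admits a nontrivial homomorphism to a left-orderable group; for $\frac{p}{q}$ near $0$ (and $\neq 0$), $M_{p/q}$ is a rational homology sphere and is irreducible for all but finitely many such slopes, and $\widetilde{PSL}(2,\BR)$ is left-orderable. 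Thus the entire problem is transferred to producing an irreducible (hence nontrivial) representation $\pi_1(M_{p/q}) \to \widetilde{PSL}(2,\BR)$.

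Next I would exploit the simple root. Let $\zeta = e^{2i\theta}$ be a simple root of $\Delta_K$ on the unit circle; since $\Delta_K(1)=\pm 1$ we have $\theta\in(0,\pi)$. The diagonal abelian representation $\alpha_\zeta$ sending the meridian $\mu$ to a rotation by $\theta$ factors through $H_1(S^3\setminus K)=\BZ$, so it is reducible and sends the null-homologous longitude $\lambda$ to the identity. The classical deformation theory of reducible characters (Ben Abdelghani, Heusener--Kroll, Frohman--Klassen) says precisely that a simple root of $\Delta_K$ on the unit circle is a smooth point of the character variety at which a unique arc of irreducible characters branches off the abelian line; because the meridian eigenvalue lies on the unit circle, this arc can be taken to consist of $PSL(2,\BR)$ characters with elliptic meridian. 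I would fix a real-analytic path $\rho_s$, $s\in[0,\epsilon)$, of such representations of the knot group with $\rho_0=\alpha_\zeta$. Since $S^3\setminus K$ is homotopy equivalent to a $2$-complex with $H^2(S^3\setminus K;\BZ)=0$, the Euler-class obstruction vanishes and each $\rho_s$ lifts canonically to $\tilde\rho_s:\pi_1(S^3\setminus K)\to\widetilde{PSL}(2,\BR)$.

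Then comes the surgery equation. The peripheral subgroup is abelian with elliptic meridian, so $\tilde\rho_s(\mu)$ and $\tilde\rho_s(\lambda)$ lie in a common lifted rotation subgroup $\widetilde{SO(2)}\cong\BR$, on which they have translation numbers $\tau_\mu(s)$ and $\tau_\lambda(s)$; then $\tilde\rho_s(\mu^p\lambda^q)=1$ is equivalent to $p\,\tau_\mu(s)+q\,\tau_\lambda(s)=0$. At $s=0$ we have $\tau_\mu(0)=\theta\neq 0$ and $\tau_\lambda(0)=0$. If $\tau_\lambda(s)$ is genuinely nonconstant near $s=0$, then $s\mapsto -\tau_\lambda(s)/\tau_\mu(s)$ is continuous, vanishes at $s=0$, and sweeps out a whole interval of values as $s$ ranges over $(0,\epsilon)$; hence every rational $\frac{p}{q}$ in some neighborhood of $0$ equals $-\tau_\lambda(s)/\tau_\mu(s)$ for some $s$. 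For such $s$ the lifted representation $\tilde\rho_s$ kills $\mu^p\lambda^q$, so it descends to an irreducible representation $\pi_1(M_{p/q})\to\widetilde{PSL}(2,\BR)$, which is nontrivial, and left-orderability follows from Boyer--Rolfsen--Wiest.

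The hard part is exactly the nondegeneracy $\tau_\lambda'(0)\neq 0$ --- equivalently, that the deformation genuinely moves the longitude. This is where Culler and Dunfield \cite{CD} needed an extra hypothesis and where the improvement of Herald and Zhang \cite{HZ} enters. I would establish it by computing the first-order variation of the longitude holonomy along the branch via the change-of-curve formula relating the derivatives of the meridian and longitude eigenvalues (following Porti), and showing that the governing coefficient is a nonzero multiple of $\Delta_K'(\zeta)$, which is nonzero precisely because $\zeta$ is a simple root. A secondary point to treat carefully is confirming that the branch lands on the $PSL(2,\BR)$ real form with elliptic meridian, rather than the $SU(2)$ one, so that the translation-number analysis on $\widetilde{PSL}(2,\BR)$ is available; this is controlled by the sign of the quadratic form governing the deformation at $\alpha_\zeta$.
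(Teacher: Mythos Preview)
The paper does not prove Theorem~\ref{HZ}; it is quoted from Herald--Zhang \cite{HZ} (who removed an extra hypothesis from Culler--Dunfield \cite{CD}) and invoked as a black box. The paper's only contribution toward Theorem~\ref{main}(ii) is the computation in Section~\ref{LO} showing that $\Delta_{K_{n,m}}$ has a simple unit-circle root, so there is no in-paper argument to compare your proposal against.

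For what it is worth, your outline does track the strategy of \cite{CD} and \cite{HZ}: deform the abelian $PSL(2,\BR)$-representation at a simple unit root of $\Delta_K$ into an arc of irreducibles with elliptic meridian, lift to $\widetilde{PSL}(2,\BR)$ using $H^2(E_K;\BZ)=0$, solve $p\,\tau_\mu(s)+q\,\tau_\lambda(s)=0$ by the intermediate value theorem, and conclude with Boyer--Rolfsen--Wiest. You correctly isolate the crux as the nonconstancy of $\tau_\lambda$ along the arc; your assertion that $\tau_\lambda'(0)$ is governed by $\Delta_K'(\zeta)\neq 0$ is precisely the substantive content supplied in \cite{HZ} and is not a routine consequence of the standard deformation or change-of-curve formulas, so in a complete write-up that step (together with the sign analysis selecting the $PSL(2,\BR)$ branch over the $SU(2)$ one) would have to be carried out in full or cited directly.
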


In view of Theorem \ref{HZ}, to prove Theorem \ref{main}(ii) it suffices to show that the Alexander polynomial of the twisted torus knot $K_{n,m}$ has a simple root on the unit circle. The rest of the paper is devoted to the proof of this fact. We start with a formula for the Alexander polynomial of a knot via Fox's free calculus. 

\subsection{The Alexander polynomial} 

Let $K$ be a knot in $S^3$ and $E_K = S^3 \setminus K$ its complement. We choose a deficiency one presentation for the knot group of $K$:
$$
\pi_1(E_K)=
\langle a_1,\ldots,a_l~|~r_1,\ldots,r_{l-1}\rangle.
$$
Note that this does not need to be a Wirtinger presentation. 
Consider the abelianization homomorphism 
$\alpha:\pi_1(E_K)\to H_1(E_K;\BZ)
\cong {\BZ}
=\langle t
\rangle$. 

The map $\alpha$ naturally induces a ring homomorphism $\tilde{\alpha}:{\BZ}[\pi_1(E_K)]\rightarrow {\BZ}[t^{\pm1}]$, 
where ${\BZ}[\pi_1(E_K)]$ is the group ring of $\pi_1(E_K)$. Consider the $(l-1)\times l$ matrix $A$ 
whose $(i,j)$-entry is $\tilde{\alpha}(\frac{\partial r_i}{\partial a_j})
\in {\BZ}[t^{\pm1}]$,
where 
$\frac{\partial}{\partial a}$ 
denotes the Fox's free differential. 
For 
$1\leq j\leq l$, 
denote by $A_j$ 
the $(l-1)\times(l-1)$ matrix obtained from $A$ 
by removing the $j$th column. 
Then it is known that the rational function 
$$\frac{\det A_j}{\det\tilde{\alpha}(a_j-1)}$$ is an invariant of $K$, see e.g. \cite{Wa}. 
It is well-defined 
up to a factor $\pm t^{k}~(k\in{\BZ})$ and is related to the Alexander polynomial $\Delta_K(t)$ of $K$ by the following formula
$$\frac{\det A_j}{\det\tilde{\alpha}(a_j-1)} = \frac{\Delta_K(t)}{t-1}.$$

\subsection{Proof of Theorem \ref{main}(2)}

Let $r_1 = w^n (aw)^m a^{-1}(aw)^{-m}$ and $r_2 = (wa)^{-m} a (wa)^m w^{n-1}$. Then we can write $\pi_1(E_{K_{n,m}}) = \la a, w \mid r_1 r_2^{-1} = 1 \ra$. In $\pi_1(E_{K_{n,m}})$ we have 
$$
\frac{\partial r_1 r_2^{-1}}{\partial a} = \frac{\partial r_1}{\partial a} + r_1 \frac{\partial  r_2^{-1}}{\partial a} =  \frac{\partial r_1}{\partial a} - r_1r_2^{-1} \frac{\partial r_2}{\partial a} = \frac{\partial r_1}{\partial a} -\frac{\partial r_2}{\partial a}.
$$

Let $\delta_k(g) = 1 + g + \cdots + g^k$. Then 
\begin{eqnarray*}
\frac{\partial r_1 r_2^{-1}}{\partial a} 
&=& w^n \big[ \delta_{m-1}(aw) - (aw)^m a^{-1}(aw)^{-m} \big( \delta_{m-1}(aw) + (aw)^m \big) \big]  \\
&& - \,  \big[ -(wa)^{-m} \delta_{m-1}(wa) w + (wa)^{-m} \big( 1 + a \delta_{m-1}(wa) w \big) \big] \\
&=& - w^n (aw)^m a^{-1} \big[ 1 - (a-1) (aw)^{-m}  \delta_{m-1}(aw) \big]  \\
&& - \,  (wa)^{-m}  \big[ 1 + (a-1)w \delta_{m-1}(aw)   \big].
\end{eqnarray*}

The Alexander polynomial $\Delta_{K_{n,m}}(t)$ of $K_{n,m}$ satisfies $$\frac{\Delta_{K_{n,m}}(t)}{t-1} =  \frac{\tilde{\alpha}(\frac{\partial r_1 r_2^{-1}}{\partial a})}{\tilde{\alpha}(w)-1}.$$ Hence, since $\tilde{\alpha}(a)=t$ and $\tilde{\alpha}(w) = t^2$, we have
\begin{eqnarray*}
-(t+1)\Delta_{K_{n,m}}(t) &=& t^{2n+3m-1} \big[ 1 - (t-1) t^{-3m} \delta_{m-1}(t^3)\big] + t^{-3m} \big[ 1+(t-1)t^2   \delta_{m-1}(t^3)\big] \\
&=& t^{2n+3m-1} + t^{-3m} - (t^{2n-1} - t^{2-3m}) (t-1)\delta_{m-1}(t^3) \\
&=& t^{2n+3m-1} + t^{-3m} - (t^{2n-1} - t^{2-3m}) (t^{3m}-1)/(t^2+t+1) \\
&=& t^{-3m}(1+t+t^{3m+2}+t^{2n+3m-1}+t^{2n+6m}+t^{2n+6m+1})/(t^2+t+1).
\end{eqnarray*}

Let $f(t) = t^{n+3m+1/2} + t^{-(n+3m+1/2)}+t^{n+3m-1/2} + t^{-(n+3m-1/2)} + t^{n-3/2} + t^{-(n-3/2)}$. Then 
$$
\Delta_{K_{n,m}}(t) = - \frac{t^{n-1}f(t)}{(t^{1/2}+t^{-1/2})(t+t^{-1}+1)}.
$$
Hence $
\Delta_{K_{n,m}}(e^{i\theta}) = - \frac{e^{i(n-1)\theta}f(e^{i\theta})}{2\cos(\theta/2)(2\cos\theta+1)}.
$

Let $g(\theta) = f(e^{i\theta})/2$. To show that $\Delta_{K_{n,m}}(t)$ has a simple root on the unit circle, it suffices to show that $g(\theta)$ has a simple root on $(0, 2\pi/3)$. We have
\begin{eqnarray*} 
g(\theta) &=& \cos (n+3m+1/2)\theta + \cos (n+3m-1/2)\theta + \cos (n-3/2) \theta \\
              &=& 2 \cos (\theta/2) \cos (n+3m)\theta + \cos (n-3/2) \theta.
\end{eqnarray*}

If $n=1$ then $g(\theta) = \cos (\theta/2) (2\cos (n+3m)\theta + 1)$. It is clear that $\theta = \frac{2\pi/3}{n+3m}$ is a simple root of $g(\theta)$ on $(0,\pi/6]$. 

Suppose $n \ge 2$. We claim that $g(\theta)$ has a simple root on $(\theta_0, \theta_1)$ where $\theta_0 = \frac{\pi/2}{n+3m}$ and $\theta_1 = \frac{\pi/2}{n+3m/2-3/4}$. 
Note that $0 < \theta_0 < \theta_1 \le \frac{\pi/2}{7/4}= \frac{2\pi}{7}$. We have
$$
g(\theta_0) = \cos (n-3/2) \theta_0 = \cos \left(\frac{\pi}{2} \cdot \frac{n-3/2}{n+3m} \right) > 0,
$$
since $0 < \frac{\pi}{2} \cdot \frac{n-3/2}{n+3m} < \frac{\pi}{2}$. At $\theta = \theta_1 = \frac{\pi/2}{n+3m/2-3/4}$ we have 
$$\cos (n+3m)\theta + \cos (n-3/2) \theta = 2 \cos (n+3m/2-3/4)\theta \cos (3m/2+3/4)\theta=0.$$ Hence 
\begin{eqnarray*} 
g(\theta_1) &=& (1-2\cos(\theta_1/2))  \cos (n-3/2) \theta_1 \\
&=& (1-2\cos(\theta_1/2))  \cos \left(\frac{\pi}{2} \cdot \frac{n-3/2}{n+3m/2-3/4} \right) < 0,
\end{eqnarray*}
since $1-2\cos(\theta_1/2) < 0 < \cos \left(\frac{\pi}{2} \cdot \frac{n-3/2}{n+3m/2-3/4} \right)$. 

We show that $g(\theta)$ is a strictly decreasing function on $(\theta_0, \theta_1)$. Indeed, we have 
\begin{eqnarray*} 
-g'(\theta) &=& \sin (\theta/2) \cos (n+3m)\theta + 2 (n+3m)\cos (\theta/2) \sin (n+3m)\theta \\
  && + \, (n-3/2) \sin (n-3/2) \theta
\end{eqnarray*}

Since $$0 < (n-3/2) \theta < \frac{\pi}{2} \cdot \frac{n-3/2}{n+3m/2-3/4} < \frac{\pi}{2},$$ we have $(n-3/2) \sin (n-3/2) \theta >0$. Since $\frac{\pi/2}{n+3m} < \theta <\frac{\pi/2}{n+3m/2-3/4}$ we have 
$${\pi/2} < (n+3m)\theta <\frac{\pi}{2} \cdot \frac{n+3m}{n+3m/2-3/4} < \pi,$$
which implies that $\cos (n+3m)\theta < 0 <\sin (n+3m)\theta$. Hence
$$
-g'(\theta) > \sin (\theta/2) \cos (n+3m)\theta + \cos (\theta/2) \sin (n+3m)\theta = \sin (n+3m+1/2)\theta.
$$
Since $0< (n+3m+1/2)\theta < \frac{\pi}{2} \cdot \frac{n+3m+1/2}{n+3m/2-3/4} \le \pi$, we have $\sin (n+3m+1/2)\theta \ge 0$. Hence $-g'(\theta) > 0$ on $(\theta_0, \theta_1)$. This, together with $g(\theta_0) > 0 > g(\theta_1)$, implies that $g(\theta)$ has a simple root on $(\theta_0, \theta_1)$. The proof of Theorem \ref{main}(ii) is complete.

\section*{Acknowledgements} 
The author has been partially supported by a grant from the Simons Foundation (\#354595 to AT).
He would like to thank K. Ichihara for helpful discussions.

\end{document}